\documentclass[12pt]{article}
%\linespread{1.6}

\usepackage{amsmath, amsthm, amsfonts, amssymb}
\usepackage{color}
\setlength{\topmargin}{0cm} \setlength{\oddsidemargin}{0cm}
\setlength{\evensidemargin}{0cm} \setlength{\textwidth}{15truecm}
\setlength{\textheight}{22.8truecm}

\newtheorem{theorem}{Theorem}[section]
\newtheorem{corollary}[theorem]{Corollary}

\newtheorem{proposition}[theorem]{Proposition}

\newtheorem{remarks}[theorem]{Remarks}
\newtheorem{remark}[theorem]{Remark}

\numberwithin{equation}{section}

\title{\bf Revisiting Li-Yau type inequalities on Riemannian manifolds}
\author{Bin Qian\thanks{Department of Mathematics and Statistics, Changshu Institute of Technology, Changshu, Jiangsu 215500, China.\newline
E-mail: binqiancs@yahoo.com, binqiancn@gmail.com}
 }\date{}

\def\<{\langle}
\def\>{\rangle}

%option\to configations click  Miktex%

%%%%%%%%%%%%%%%%%%%%%%%%%%%%%%%%%%%%%%%%%%%%%%%%%%%%%%%%%%%%%%%%%%%%%%%%%%%%
%%%%%%%%%%%%%%%%%%%%%%%%%%%%%%DEBUT%%%%%%%%%%%%%%%%%%%%%%%%%%%%%%%%%%%%%%%%%
%%%%%%%%%%%%%%%%%%%%%%%%%%%%%%%%%%%%%%%%%%%%%%%%%%%%%%%%%%%%%%%%%%%%%%%%%%%%

\begin{document}
\maketitle \begin{abstract}
Inspired Yau's work (Comm. Anal. Geom., 1994), in this short note we provide a new version  of Li-Yau gradient estimate for the linear heat equation, which generalizes some known results and gives new gradient estimates. Also we explain the different known results as different cases  here.
\end{abstract}

\textbf{Keywords}: Li-Yau inequality, Gradient estimates, Heat equation. \vskip10pt \textbf{2020
MR Subject Classification:} 53C21 35K05

\section{Introduction}

In their well known work, Li and Yau \cite{LY} proved an upper bound on the gradient estimates of positive solutions to the heat equation, which is socalled Li-Yau inequality. It gives parabolic Harnack inequality, which provides a comparison between heat at two different points in space and at different times. It also gives  good bounds of the associated heat kernel, Green functions and lower bound of Dirichlet or Neumann eigenvalue.  Since then it becomes a powerful tool in heat kernel analysis, PDE, entropy theory, differential geometry etc. It also plays an important role in the Perelman's solution to the Poincar\'e conjecture. More precisely,  it claims that, for an $n$-dimensional compact Riemannian manifold with Ricci curvature bounded below by $-K(K\ge0)$, if $u$ is a positive solution to the heat equation $\partial_t u=\Delta u$, then for all $\alpha>1$,
\begin{equation}\label{LY-grad}
|\nabla \log u|^2 -\alpha (\log u)_t\le \frac{n\alpha^2}{2t}+ \frac{n\alpha^2K}{2(\alpha-1)} .
\end{equation}

If we take the Gaussian kernel $u(x,t)=\frac{1}{(4\pi t)^{\frac{n}{2}}}e^{-\frac{|x|^2}{4t}}$ on $\mathbb{R}^n$, in this case the inequality \eqref{LY-grad} is equality for $K=0, \alpha=1$. There are a series work on improving this inequality for negative curvature and  for small time and large time, see  \cite{Hamilton93,SZ06,Li-Xu11,BG11,BBG17,Yu-Zhao20} and the references therein. We briefly recall them as follows:

 B. Davies in \cite{Davies89} improved the  estimate to
\begin{equation}\label{LY-Hamil}
\frac{|\nabla u|^2}{u^2}-\alpha \frac{u_t}{u}\le\frac{n\alpha^2}{2t}
 +\frac{n\alpha^2K}{4(\alpha-1)}
\end{equation}
holds for any $\alpha>1, t>0$.

In \cite{Hamilton93}, R. S. Hamilton proved a new gradient estimate for the
heat equation, the new viewpoint is that we can see constant  $\alpha$ in \cite{LY} as function  of time $t$:  \begin{align}\label{Hamil}
\frac{|\nabla u|^2}{u^2}-e^{2Kt}\frac{u_t}{u}\le e^{4Kt}\frac{n}{2t}.
\end{align}

Bakry-Qian in \cite{BQ99}  improved the above inequality to
the following: for all $t>0$,
\begin{equation}\label{Yau2}
\frac{|\nabla u|^2}{u^2}-\frac{u_t}{u}\le \sqrt{nK}\sqrt{\frac{|\nabla
u|^2}{u^2}+\frac{n}{2t}+\frac{nK}{4} }+\frac{n}{2t}.
\end{equation}
and
\begin{equation}\label{LX-eq1}
|\nabla f|^2-\left(1+\frac23Kt\right)f_t\le
\frac{n}{2t}+\frac{nK}{2}\left(1+\frac13Kt\right).
\end{equation}

Li-Xu in \cite{Li-Xu11} re-found \eqref{LX-eq1} and further got the following gradient estimate:
\begin{equation}\label{LX-eq2}
|\nabla
f|^2-\left(1+\frac{\sinh(Kt)\cosh(Kt)-Kt}{\sinh^2(Kt)}\right)f_t\le
\frac{nK}{2}(\coth(Kt)+1).
\end{equation}

Meanwhile, Bakry and Ledoux in \cite{Bakry-Ledoux06} found a logarithmic Sobolev form of the
Li-Yau parabolic inequality by semigroup method , which  generalizes \eqref{LY-grad}. See also \cite{BG11} and \cite{BV12} in this direction.  The author himself in \cite{Qian14} provides  a general form of gradient estimate which generalizes \eqref{LX-eq1} and \eqref{LX-eq2}, see also \cite{BG11,Yu-Zhao20}.

 In the very recent, Bakry, Bolley and Gentil in \cite{BBG17} have obtained the following  refined global Li-Yau inequality:
\begin{equation}\label{BBG-1}
\frac{|\nabla u|^2}{u^2}<\frac{n}{2}\Phi_t\left(\frac{4}{-nK}\frac{\Delta u}{u}\right),
\end{equation}
where
$$
\Phi_t(x)=\begin{cases}
\frac{-K}{2}\left(x-2+2\sqrt{1-x}\coth(-Kt\sqrt{1-x})\right), & x\le 1\\
 \frac{-K}{2}\left(x-2+2\sqrt{x-1}\cot(-Kt\sqrt{x-1})\right), &1\le x<1+\frac{\pi^2}{K^2t^2}.
\end{cases}
$$
This bound  works  for both negative and positive curvature.   They show estimate \eqref{BBG-1} is stronger than \eqref{LY-grad}, \eqref{LY-Hamil}, \eqref{Yau2}, \eqref{LX-eq1} and \eqref{LX-eq2},  see  section 5 in \cite{BBG17}.

Inspired Yau's work \cite{Yau94}, see also \cite{MZS08}, we give  a new version  of gradient estimate for the linear heat equation, if the parameter functions satisfy certain nonlinear condition, see Theorem \ref{YHM-thm0}. We further analysis the nonlinear condition, and divide it into two cases (see Theorem \ref{thm1}). Then we show  the known results \eqref{LY-grad},  \eqref{LX-eq1} and \eqref{LX-eq2} are particular ones of  the respective cases (see Corollary \ref{cor1} and Theorem \ref{thm-case2}), also we refind the Hamilton type gradient estimate which refines \eqref{Hamil}, see Corollary \ref{cor2} and Corollary \ref{cor2-2}. Evenmore, we can derive new estimate refining \eqref{LY-Hamil} by combining the two cases,  see Remark \ref{rem2}.

\section{Main results}

\begin{theorem}\label{YHM-thm0}

Let $M$ be a compact manifold without
boundary, $Ricci(M)\ge-K$.  Let $u$ be any positive solution of the  heat equation \begin{equation}\label{heat} u_t=\Delta u
\end{equation} on $M$. For $c(t)\in C^1(0,\infty)$ and  $\alpha\in C^1[0,\infty)$ be two arbitrary  positive functions,  if the conditions below are satisfied: \begin{description}
\item{(A)} $c(0+):=\lim\limits_{t\downarrow0}c(t)=+\infty$;
\item{(B)} (nonlinear condition) the following quadratic inequality
holds for all $x\ge 0$,$t>0$,
$$-\frac{2(1-\alpha(t))^2}{n\alpha^2(t)}x^2+\left(2K+\frac{4(1-\alpha)c(t)}{n\alpha^2}
-\frac{\alpha'}{\alpha}\right)x-\frac{2c^2(t)}{n\alpha^2}-c'(t)+\frac{\alpha'(t)}{\alpha(t)}c(t)\le0.
$$
 \end{description}
Then we have for all $t>0$, \begin{equation}\label{YHM-LYH0} \left|\nabla
\log u\right|^2-\alpha(t)(\log u)_t-c(t)\le 0. \end{equation}

\end{theorem}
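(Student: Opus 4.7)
Set $f = \log u$, so the heat equation becomes $f_t = \Delta f + |\nabla f|^2$, and define the spatial supremum
\[
\phi(t) \;:=\; \sup_{x \in M}\bigl(|\nabla f|^2 - \alpha(t) f_t\bigr).
\]
The plan is to prove \eqref{YHM-LYH0} by establishing $\phi(t) \le c(t)$ for all $t > 0$ through a parabolic maximum-principle/Gr\"onwall comparison. Condition (A) together with the smoothness and positivity of $u$ on $M \times [0,\infty)$ forces $\phi(t) - c(t) \to -\infty$ as $t \downarrow 0$, supplying initial data for the comparison; by compactness of $M$, the supremum $\phi(t)$ is attained at some $x_t$ for every $t > 0$.

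At $x_t$ one has $\nabla(|\nabla f|^2 - \alpha f_t) = 0$ and $\Delta(|\nabla f|^2 - \alpha f_t) \le 0$, so a Hamilton/Danskin envelope argument gives $\phi'(t) \le \bigl[(\partial_t - \Delta)(|\nabla f|^2 - \alpha f_t)\bigr](x_t,t)$. Bochner's formula with $\mathrm{Ric} \ge -K$, the identity $(\partial_t - \Delta) f_t = 2\langle\nabla f,\nabla f_t\rangle$ (which follows from $f_t = \Delta f + |\nabla f|^2$), and the cancellation at $x_t$ produced by $\nabla|\nabla f|^2 = \alpha\,\nabla f_t$ combine to bound the right-hand side by $-\alpha'(t) f_t - 2|\nabla^2 f|^2 + 2K|\nabla f|^2$. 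Substituting the standard refinement $|\nabla^2 f|^2 \ge (\Delta f)^2/n$ with $\Delta f = f_t - |\nabla f|^2$, and eliminating $f_t = (|\nabla f|^2 - \phi)/\alpha$ from the defining relation of $\phi$ at $x_t$, yields $\phi'(t) \le R_\phi(y_t)$, where $y_t := |\nabla f|^2(x_t,t)$ and $R_\phi(\cdot)$ is a downward parabola in $y$ with leading coefficient $-2(1-\alpha)^2/(n\alpha^2)$.

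The heart of the proof, and in my view the main computational obstacle, is the algebraic identity that $g(\phi, t) := \max_{y} R_\phi(y)$ is \emph{linear} in $\phi$: on completing the square in $y$, the $\phi^2$ contribution from the square of the linear-in-$y$ coefficient cancels exactly against the $-2\phi^2/(n\alpha^2)$ constant term in $R_\phi$, leaving
\[
g(\phi, t) \;=\; \frac{2K - \alpha'(t)}{1 - \alpha(t)}\,\phi \;+\; \frac{n\,\alpha(t)^2\,\bigl(2K - \alpha'(t)/\alpha(t)\bigr)^2}{8\,(1-\alpha(t))^2}.
\]
A direct expansion identifies condition (B) as precisely the statement $g(c(t), t) \le c'(t)$. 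Combining $\phi'(t) \le g(\phi(t), t)$ with this identification and the linearity in $\phi$ produces the linear ODE inequality
\[
\bigl(\phi - c\bigr)'(t) \;\le\; \frac{2K - \alpha'(t)}{1 - \alpha(t)}\,\bigl(\phi(t) - c(t)\bigr),
\]
and since $\phi(t) - c(t) \to -\infty$ as $t \downarrow 0$ by condition (A), Gr\"onwall's inequality forces $\phi(t) \le c(t)$ for every $t > 0$, which is the desired \eqref{YHM-LYH0}.
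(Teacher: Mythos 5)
Your computations are largely sound: the evolution inequality for the supremum function via Hamilton's trick, the Bochner step, and the algebraic identity that the \emph{unconstrained} maximum over $y\in\mathbb{R}$ of $R_\phi(y)$ is affine in $\phi$ (the $\phi^2$ cancellation you describe does occur) all check out. The gap is in the sentence ``a direct expansion identifies condition (B) as precisely the statement $g(c(t),t)\le c'(t)$.'' Condition (B) only requires the quadratic $R_c(x)-c'(t)$ to be nonpositive for $x\ge 0$, whereas $g(c(t),t)\le c'(t)$ with $g$ the unconstrained maximum is the discriminant condition, i.e.\ nonpositivity for \emph{all} real $x$. These are not equivalent: when the vertex $-b/(2a)$ is negative, the quadratic can be nonpositive on $[0,\infty)$ while its global maximum is positive (take $-x^2-10x-1$). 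This is exactly the regime isolated as Case~1 of Proposition~\ref{prop-basic} and exploited in Corollaries~\ref{cor1}--\ref{cor2-2}, so your argument proves the theorem only under the strictly stronger hypothesis \eqref{case2} and misses the applications that motivate the statement. Replacing $g$ by the constrained supremum $\sup_{y\ge 0}R_\phi(y)$ restores the equivalence with (B) but destroys the linearity in $\phi$ on which your Gr\"onwall step rests (the constrained sup is piecewise, equal to $R_\phi(0)$, a concave quadratic in $\phi$, when the vertex is negative), so the comparison does not close as written. The paper avoids this by arguing at a first touching point, where only the value of the quadratic at the single nonnegative number $y_{t_0}=|\nabla\log u|^2(x_0,t_0)$ is needed, and then invoking the strong maximum principle rather than an ODE comparison.

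A secondary point to address even in the regime where your argument does apply: the Gr\"onwall coefficient $(2K-\alpha')/(1-\alpha)$ is singular wherever $\alpha=1$, which in all the intended applications happens at $t=0$ (e.g.\ $\alpha=1+\tfrac23Kt$ gives a coefficient $\sim -2/t$). You then need to check that the integrating factor degenerates slowly enough that the initial condition $\phi-c\to-\infty$ (or even just $\limsup_{t\downarrow 0}\,t^2(\phi-c)\le 0$ after multiplying through) still forces $\phi\le c$; this is not automatic and deserves an explicit estimate.
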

\begin{proof}
Without loss of any generalization, we can assume $u\ge \varepsilon>0$, otherwise $u_{\varepsilon}=u+\varepsilon$, obviously $u_{\varepsilon}$ is also a solution to the heat equation \eqref{heat}, then letting $\varepsilon\to0$ gives the desired result. For arbitrary  positive function $\alpha\in C^1[0,\infty)$, denote $$F=|\nabla \log u|^2-\alpha(t)(\log u)_t.$$
Direct computation gives
$$\aligned
\partial_tF&=2\nabla \log u\cdot\nabla \frac{u_t}{u}-\alpha'(t)(\log u)_t-\alpha(t)(\log u)_{tt}\\
&=2\nabla \log u\cdot\nabla\frac{\Delta u}{u}-\alpha'(t)\frac{\Delta u}{u}-\alpha(t)(\log u)_{tt}\\
&=2\nabla \log u\cdot\nabla (\Delta \log u+|\nabla \log u|^2)-\alpha' \Delta \log u-\alpha'|\nabla\log u|^2-\alpha(t)(\log u)_{tt},
\endaligned$$
$$
\Delta F=\Delta |\nabla \log u|^2-\alpha\Delta(\log u)_t,
$$
and
$$\aligned
\nabla \log u\cdot\nabla F&=\nabla \log u\cdot\nabla |\nabla \log u|^2-\alpha \nabla \log u\cdot\nabla (\log u)_t\\
&=\nabla \log u\cdot\nabla |\nabla \log u|^2-\frac12\alpha \partial_t( |\nabla \log u|^2).
\endaligned$$

This yields
\begin{align}
(\partial_t-\Delta)F&=-\left(\Delta |\nabla\log u|^2-2\nabla\log u\cdot\nabla \Delta\log u\right)+2\nabla \log u\cdot\nabla |\nabla \log u|^2-\alpha'\Delta \log u\nonumber\\
&\hskip12pt -\alpha'|\nabla \log u|^2+\alpha\left((\Delta-\partial_t)\log u\right)_t\nonumber\\
&=-\left(\Delta |\nabla\log u|^2-2\nabla\log u\cdot\nabla \Delta\log u\right)+2\nabla \log u\cdot\nabla |\nabla \log u|^2-\alpha'\Delta \log u\nonumber\\
&\hskip12pt -\alpha'|\nabla \log u|^2-\alpha \partial_t(|\nabla \log u|^2)\nonumber\\
&=2\nabla \log u\cdot\nabla F-2\left(|\mbox{Hess}\log u|^2+Ricci (\nabla \log u,\nabla \log u)\right)\nonumber\\
&\hskip12pt -\alpha'(t)\left(\Delta\log u+|\nabla \log u|^2\right).\label{eqdiff}
\end{align}
where the last equality follows from the Bochner-Weitzenb\"ock formula.

 For the functional $\Psi:=F-c(t)$, we have obviously $\Psi(0+)\le0$. Assuming that: if $\Psi=F-c(t)\le 0$ for $t\le
t_0$ and $\Psi(x_0,t_0)=F(x_0,t_0)-c(t_0)=0$ for some $x_0\in M$, by the maximum principle  we would have at the point $(x_0,t_0)$,
$$
\frac{d}{dt}\Psi\ge 0,\ \Delta \Psi\le 0,\ \nabla
\Psi=0.
$$
Noticing that at $(x_0,t_0)$, $$
\Delta \log u=-\frac{F}{\alpha}+\frac{1-\alpha}{\alpha}|\nabla \log u|^2=-\frac{c}{\alpha}+\frac{1-\alpha}{\alpha}|\nabla \log u|^2,
$$
substituting into \eqref{eqdiff}, it follows at $(x_0,t_0)$
\begin{align}
0\le (\partial_t-\Delta) \Psi&=2\nabla \log u\cdot\nabla \Psi-2\left(|\mbox{Hess}\log u|^2+Ricci (\nabla \log u,\nabla \log u)\right)\nonumber\\
&\hskip12pt -\alpha'(t)\left(\Delta\log u+|\nabla \log u|^2\right)-c'\nonumber\\
&\le -\frac{2}{n}(\Delta \log u)^2+2K|\nabla \log u|^2-\alpha'(t)\left(\Delta\log u+|\nabla \log u|^2\right)-c'\nonumber\\
&=-\frac{2}{n}\left(\frac{1-\alpha}{\alpha}|\nabla \log u|^2-\frac{c}{\alpha}\right)^2+2K|\nabla \log u|^2-\alpha'\left(\frac{|\nabla \log u|^2}{\alpha}-\frac{c}{\alpha}\right)-c'\nonumber\\
&=-\frac{2}{n}\left(\frac{\alpha-1}{\alpha}\right)^2|\nabla \log u|^4+\left(2K+\frac{4(1-\alpha)c}{n\alpha^2}-\frac{\alpha'}{\alpha}\right)|\nabla \log u|^2\nonumber\\
&\hskip 12pt -\frac{2c^2}{n\alpha^2}-c'+\frac{\alpha'c}{\alpha}.\nonumber
\end{align}
Combining with nonlinear condition (B), we have at $(x_0,t_0)$,
$$
\partial_t \Psi=0, \Delta \Psi=0, \nabla \Psi=0.
$$
 By the strong maximum principle, we see that $ \Psi(x_0,t)\le 0$, $\forall t\in (t_0,t_0+\delta)$ for some $\delta>0$, thus we have
 $$
\Psi =|\nabla \log u|^2-\alpha (\log u)_t -c(t)\le 0, \forall t>0.
 $$
  We complete the proof.  \end{proof}

\begin{proposition}\label{prop-basic}
For the quadratic function $f(x)=ax^2+bx+c$, where $a,b,c$ are some constants and $a<0$.  $f(x)\le 0$ holds for all $x\ge 0$ if and only if the following two cases satisfy any one of them: {\bf Case 1:} $b\le 0$ and $c\le 0$; {\bf Case 2:} $b^2-4ac\le 0$.

\end{proposition}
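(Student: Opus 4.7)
The plan is to locate the vertex $x^* = -b/(2a)$ of the downward-opening parabola $f$ and to branch on whether it lies in $[0,\infty)$. Since $a<0$, the condition $x^*\le 0$ is equivalent to $b\le 0$, so this dichotomy corresponds exactly to the two cases in the statement.

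For sufficiency ($\Leftarrow$), I would handle Case 2 first: the global maximum of $f$ on $\mathbb{R}$ is $f(x^*) = c - b^2/(4a) = -(b^2-4ac)/(4a)$, and the hypothesis $b^2 - 4ac\le 0$ together with $a<0$ makes this $\le 0$, so $f\le 0$ on all of $\mathbb{R}$ and in particular on $[0,\infty)$. For Case 1, the assumption $b\le 0$ forces $x^*\le 0$, so $f$ is non-increasing on $[0,\infty)$ and $f(x)\le f(0) = c\le 0$.

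For necessity ($\Rightarrow$), I would assume $f\le 0$ on $[0,\infty)$, read off $c\le 0$ by evaluating at $x=0$, and then split on the sign of $b$. If $b\le 0$, Case 1 holds. If $b>0$, then $x^* = -b/(2a) > 0$ lies in $[0,\infty)$, so the hypothesis yields $f(x^*)\le 0$; multiplying through by $-4a > 0$ turns this into $b^2 - 4ac\le 0$, so Case 2 holds. The only point that requires care is sign-tracking when dividing by the negative $a$ (or, equivalently, by the positive $-4a$); there is no deeper obstacle, and the two cases---while not mutually exclusive---jointly cover every sign configuration of $b$, which is exactly what the proposition asserts.
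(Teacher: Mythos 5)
Your proof is correct and complete: the vertex-location dichotomy, the evaluation at $x=0$, and the sign-tracking when multiplying by $-4a>0$ are all handled properly, and together they establish both directions of the equivalence. The paper itself offers no argument here (its proof reads only ``The proof is elementary''), so your write-up simply supplies the standard elementary details the author chose to omit.
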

\begin{proof}  The proof is elementary.
\end{proof}

Combining with Theorem \ref{YHM-thm0} and Proposition \ref{prop-basic}, we have the following theorem.

\begin{theorem}\label{thm1}
Let $M$ be a compact manifold without
boundary, $Ricci(M)\ge-K$.  Let $u$ be any positive solution of the  heat equation $u_t=\Delta u
$ on $M$. For $c(t)\in C^1(0,\infty)$, and  $\alpha\in C^1[0,\infty)$ be an arbitrary  positive function, if the conditions below are satisfied: \begin{description}
\item{(a).} $c(0+):=\lim\limits_{t\downarrow0}c(t)=+\infty$;
\item{(b).} Suppose the following two cases satisfy any one of them:
 \begin{align}
&\mbox{ \bf{  Case\  1:}}\ \ \forall t>0,
(1-\alpha)c(t)\le \frac{n\alpha(\alpha'-2K\alpha)}{4}  \ \mbox{and} \ \left(\frac{\alpha}{c}\right)'\le \frac{2}{n\alpha}.\label{case1}\\
& \mbox{ \bf{ Case\ 2:} } \ \forall t>0,
(1-\alpha)^2 c'(t)\ge (1-\alpha)(2K-\alpha')c(t)+ \frac{n(2K\alpha-\alpha')^2}{8}.\label{case2}
\end{align}
 \end{description}

Then we have for all $t>0$,  \begin{equation}\label{YHM-LYH0} \left|\nabla
\log u\right|^2-\alpha(t)(\log u)_t-c(t)\le 0. \end{equation}

\end{theorem}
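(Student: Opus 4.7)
The plan is to reduce everything to Theorem \ref{YHM-thm0} via Proposition \ref{prop-basic}. Condition (a) coincides verbatim with condition (A) of Theorem \ref{YHM-thm0}, so the entire task reduces to checking the quadratic inequality (B) for every $x\ge 0$ and $t>0$. Viewing the left hand side of (B) as a quadratic $f(x)=ax^2+bx+c_0$ in $x$ with coefficients
$$a=-\frac{2(1-\alpha)^2}{n\alpha^2},\qquad b=2K+\frac{4(1-\alpha)c}{n\alpha^2}-\frac{\alpha'}{\alpha},\qquad c_0=-\frac{2c^2}{n\alpha^2}-c'+\frac{\alpha'c}{\alpha},$$
one has $a\le 0$, with equality only in the degenerate case $\alpha\equiv 1$ (which can be handled as a limit of nondegenerate cases). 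Thus Proposition \ref{prop-basic} applies and reduces (B) to the disjunction of its two alternatives.

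For the first alternative of Proposition \ref{prop-basic} ($b\le 0$ and $c_0\le 0$), multiplying $b\le 0$ by $n\alpha^2>0$ gives $4(1-\alpha)c\le n\alpha(\alpha'-2K\alpha)$, which is the first half of Case 1 of (b). The inequality $c_0\le 0$ rewrites as $\frac{2c^2}{n\alpha^2}\ge\frac{\alpha' c-\alpha c'}{\alpha}$, and dividing both sides by $c^2>0$ together with the identity $(\alpha/c)'=(\alpha' c-\alpha c')/c^2$ yields $(\alpha/c)'\le \frac{2}{n\alpha}$, which is the second half of Case 1.

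For the second alternative ($b^2-4ac_0\le 0$), I would set $A:=2K\alpha-\alpha'$ so that $b=A/\alpha+4(1-\alpha)c/(n\alpha^2)$, and expand
$$b^2-4ac_0=\frac{1}{\alpha^2}\left[A^2+\frac{8(1-\alpha)[A+(1-\alpha)\alpha']\,c}{n}-\frac{8(1-\alpha)^2 c'}{n}\right].$$
The crucial simplification is the algebraic identity $A+(1-\alpha)\alpha'=\alpha(2K-\alpha')$; substituting this and multiplying through by $n\alpha^2/8>0$ turns $b^2-4ac_0\le 0$ precisely into
$$(1-\alpha)^2 c'\ge (1-\alpha)(2K-\alpha')c+\frac{n(2K\alpha-\alpha')^2}{8},$$
which is Case 2 of (b). Quoting Theorem \ref{YHM-thm0} then delivers \eqref{YHM-LYH0}.

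The only real obstacle is the bookkeeping in the expansion of $b^2-4ac_0$ and the recognition of the factoring identity $A+(1-\alpha)\alpha'=\alpha(2K-\alpha')$; beyond that the argument is purely a translation exercise between the two cases of Proposition \ref{prop-basic} and the labeled Cases 1--2 of hypothesis (b).
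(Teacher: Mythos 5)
Your proposal is correct and follows exactly the paper's route: reduce condition (a) to condition (A) of Theorem \ref{YHM-thm0}, then use Proposition \ref{prop-basic} to split the quadratic condition (B) into the sign condition on $b$ and $c_0$ (giving \eqref{case1}) and the discriminant condition (giving \eqref{case2}); the paper merely asserts these rewritings while you carry out the algebra. One small slip: in your displayed expansion of $b^2-4ac_0$ the middle term should read $\frac{8(1-\alpha)[A+(1-\alpha)\alpha']c}{n\alpha}$ (an $\alpha$ is missing in the denominator), but after substituting $A+(1-\alpha)\alpha'=\alpha(2K-\alpha')$ that $\alpha$ cancels and your final form of Case 2 is correct.
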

\begin{proof}
The proof follows from the combination of Theorem \ref{YHM-thm0} and Proposition  \ref{prop-basic}. By the assumption of $c(0+)=+\infty$, we have Condition A holds in Theorem \ref{YHM-thm0}. To verify the nonlinear condition $B$ in Theorem \ref{YHM-thm0} , by Proposition \ref{prop-basic}, we divide it  into  two cases.  \begin{description}
\item{(1).} Case 1 in Proposition \ref{prop-basic}:
$$\begin{cases}
2K+\frac{4(1-\alpha)c(t)}{n\alpha^2}
-\frac{\alpha'}{\alpha}&\le 0,\\
-\frac{2c^2(t)}{n\alpha^2}-c'(t)+\frac{\alpha'(t)}{\alpha(t)}c(t)&\le0.
\end{cases}
$$
We can  rewrite  them as  \eqref{case1}.

\item{(2).} Case 2 in Proposition \ref{prop-basic}:
$$
\left(2K+\frac{4(1-\alpha)c(t)}{n\alpha^2}
-\frac{\alpha'}{\alpha}\right)^2-\frac{8(1-\alpha(t))^2}{n\alpha^2(t)}\left(\frac{2c^2(t)}{n\alpha^2}+c'(t)-
\frac{\alpha'(t)}{\alpha(t)}c(t)\right)\le0,
$$
which can be rewritten as \eqref{case2}.
\end{description}
By Theorem \ref{YHM-thm0}, the proof is completed.
\end{proof}
\begin{remark}\label{rem1}
The conclusion of Theorem \ref{thm1} holds for all $t\le T$ ($T>0$) if the second assumption is replaced by \eqref{case1} holds for $t\in (0,t_0]$ and \eqref{case2} holds for $t\in (t_0,T]$ for some $0<t_0<T$. This is because condition B in Theorem \ref{YHM-thm0} still holds in this case.
\end{remark}

\subsection{Case 1 in Theorem \ref{thm1} }

%\begin{theorem}\label{thm-case1}
%Let $M$ be a compact manifold without
%boundary, $Ricci(M)\ge-K$.  Let $u$ be any positive solution of the  heat equation $u_t=\Delta u
%$ on $M$. For any arbitrary $C^1[0,\infty)$ function $\alpha> 1$, we have
%\begin{equation}\label{thm-case1-eq1}
%|\nabla \log u|^2-\alpha (\log u)_t\le c(t):=\max\left\{\frac{n\alpha(2K\alpha-\alpha')}{4(\alpha-1)}, \frac{n\alpha}{2\int_0^t \frac{1}{\alpha}ds}\right\}.
%\end{equation}

%\end{theorem}
%\begin{proof}  We only need to verify {\bf Case 1} in Theorem \ref{thm1}, i.e. \eqref{case1}. By $\left(\frac{1}{c}\right)'\le \frac{2}{n\alpha^2}$ and $c(0+)=\infty$, we can obtain $c(t)\ge \frac{n\alpha}{2\int_0^t \frac{1}{\alpha}ds}$. Meanwhile $c(t)\ge \frac{n\alpha(2K\alpha-\alpha')}{4(\alpha-1)}$, hence $c(t)\ge \max\left\{\frac{n\alpha(2K\alpha-\alpha')}{4(\alpha-1)}, \frac{n\alpha}{2\int_0^t \frac{1}{\alpha}ds}\right\}$. Although the function  $\max\left\{\frac{n\alpha(2K\alpha-\alpha')}{4(\alpha-1)}, \frac{n\alpha}{2\int_0^t \frac{1}{\alpha}ds}\right\}$ is not $C^1$ in time $t$, we can using the approximation method to complete the proof.
%\end{proof}

\begin{corollary}\label{cor1}
Let $M$ be a compact manifold without boundary, $Ricci(M)\ge-K (K\ge0)$.
Let $u$ be any positive solution of the  heat equation $u_t=\Delta u $ on $M$ and let  $\alpha>1$ be a positive constant. We have for all $t>0$,
\begin{equation}\label{cor1-eq1}
|\nabla \log u|^2-\alpha (\log u)_t\le \frac{n\alpha^2}{2}\max\left\{\frac{1}{t}, \frac{K}{\alpha-1}\right\}
\end{equation}
\end{corollary}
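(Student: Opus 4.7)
The plan is to apply Theorem \ref{thm1} in Case 1 with the piecewise barrier function
$$c(t) = \frac{n\alpha^2}{2}\max\!\left\{\frac{1}{t},\,\frac{K}{\alpha-1}\right\},$$
which splits at $t_0 := (\alpha-1)/K$ into two smooth branches: $c_1(t) := n\alpha^2/(2t)$ on $(0,t_0]$ and the constant $c_2 := n\alpha^2 K/(2(\alpha-1))$ on $[t_0,\infty)$. Since $\alpha$ is constant, $\alpha'\equiv 0$ everywhere, and condition (a) of Theorem \ref{thm1} is immediate from the $c_1$ branch.

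Verifying \eqref{case1} is then routine on each piece. With $\alpha'\equiv 0$ the first inequality of \eqref{case1} becomes $(1-\alpha)c(t)\le -\frac{n\alpha^2K}{2}$, equivalently $c(t)\ge \frac{n\alpha^2K}{2(\alpha-1)}$, which is exactly what the maximum in the definition of $c$ guarantees. For the second inequality $\left(\alpha/c\right)' \le 2/(n\alpha)$, a direct calculation on the $c_1$-branch gives $-\alpha c_1'/c_1^2 = 2/(n\alpha)$ (equality), while on the $c_2$-branch $c_2'\equiv 0$ makes the inequality trivial. Hence the nonlinear condition (B) of Theorem \ref{YHM-thm0} is satisfied pointwise on each sub-interval.

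The only subtlety is that $c$ has a corner at $t_0$ and thus is not $C^1$ on all of $(0,\infty)$; as explained in the justification of Remark \ref{rem1}, this is harmless because condition (B) holds pointwise on each sub-interval. Concretely, I would first apply Theorem \ref{thm1} on $(0,t_0]$ with $c=c_1$ to obtain $F(\cdot,t_0)\le c_2$, and then rerun the maximum-principle argument of Theorem \ref{YHM-thm0} on $[t_0,\infty)$ with the constant barrier $c_2$. For this second step, the key observation is that at $c\equiv c_2$ with $\alpha'=0$ the linear coefficient $2K + \frac{4(1-\alpha)c_2}{n\alpha^2}$ in (B) collapses to zero, so (B) reduces to the obviously true $-\frac{2(1-\alpha)^2}{n\alpha^2}x^2 - \frac{2c_2^2}{n\alpha^2}\le 0$. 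Matching the two pieces across $t_0$ is the only step that demands any real care; everything else is bookkeeping.
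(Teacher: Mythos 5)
Your proposal is correct and follows essentially the same route as the paper: verify Case 1 of Theorem \ref{thm1} for constant $\alpha>1$, which forces $c(t)\ge \frac{n\alpha^2K}{2(\alpha-1)}$ and $(1/c)'\le \frac{2}{n\alpha^2}$, and then take $c(t)=\frac{n\alpha^2}{2}\max\{\frac1t,\frac{K}{\alpha-1}\}$. The only difference is cosmetic: you handle the corner at $t_0=(\alpha-1)/K$ by splitting the time interval and restarting the maximum principle (as in Remark \ref{rem1}), whereas the paper smooths $c$ by $C^1$ approximations $c_\varepsilon$ satisfying \eqref{case1}; both are valid.
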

\begin{proof} We verify Case 1 in Theorem \ref{thm1}, i.e. \eqref{case1}. By $\left(\frac{1}{c}\right)'\le \frac{2}{n\alpha^2}$ and $c(0+)=\infty$, we can obtain $\frac{1}{c(t)}\le \frac{2t}{n\alpha^2}$. Meanwhile $\frac{1}{c(t)}\le \frac{2(\alpha-1)}{n\alpha^2 K}$, hence we can choose $c(t)=\frac{n\alpha^2}{2}\max\left\{\frac{1}{t}, \frac{K}{\alpha-1}\right\}$ to versify  \eqref{case1}. Although the function  $c(t)=\frac{n\alpha^2}{2}\max\left\{\frac{1}{t}, \frac{K}{\alpha-1}\right\}$ is not $C^1$ in time $t$, we can find some smooth enough functions $c_{\varepsilon}(t)$  satisfying   \eqref{case1} and $c_{\varepsilon}(t)\to c(t)$ as $\varepsilon\to0$, thus we complete the proof.
\end{proof}
\begin{remarks}\label{rem2}
\item{(1).} Combining with Remark \ref{rem1}, we can strengthen the above result to
\begin{equation}\label{cor1-eq2}
 |\nabla \log u|^2-\alpha (\log u)_t\le \begin{cases}\frac{n\alpha^2}{2t}, &t\le \frac{\alpha-1}{K}\\
 \frac{Kn\alpha^2}{4(\alpha-1)}\left(1+e^{-2\left(\frac{K}{\alpha-1}t-1\right)}\right), &t>\frac{\alpha-1}{K}.
\end{cases}\end{equation}

\item{(2).} \eqref{cor1-eq1} and \eqref{cor1-eq2}  improve the Li-Yau inequality \eqref{LY-grad} and Davies's result \eqref{LY-Hamil} respectively.  Also  \eqref{cor1-eq2}  improves  Corollary 1.2, Corollary 1.4  in \cite{Yu-Zhao19} .
\end{remarks}

\begin{proof}[Proof of \eqref{cor1-eq2}]
By Corollary \ref{cor1}, we can choose $c(t)=\frac{n\alpha^2}{2t}$, for $0<t\le t_0:=\frac{\alpha-1}{K}$. For $t>t_0$, we only need  $c(t)$ satisfies  \eqref{case2} with $c(t_0)=\frac{nK\alpha^2}{2(\alpha-1)}$ and $\alpha>1$ is a constant. Solving it with equality gives
$$
c(t)=\frac{Kn\alpha^2}{4(\alpha-1)}\left(1+e^{-2\left(\frac{K}{\alpha-1}t-1\right)}\right), \ t\ge t_0=\frac{\alpha-1}{K}.
$$
It's easy to see  the right hand side in \eqref{cor1-eq2} is $C^1$ in time $t$.  Hence the desired result immediately follows by Remark \ref{rem1}.
\end{proof}

\begin{corollary}\label{cor2}
Let $M$ be a compact manifold without boundary, $Ricci(M)\ge-K (K\ge0)$.
Let $u$ be any positive solution of the  heat equation $u_t=\Delta u $ on $M$. We have for all $t>0$ and $\theta\ge 0$
\begin{equation}\label{cor2-eq1}
|\nabla \log u|^2-e^{2\theta Kt}(\log u)_t\le \frac{ nKe^{4\theta Kt}}{e^{2\theta Kt}-1}\max\left\{\frac{1-\theta}{2}, \theta\right\},
\end{equation}
where $\max\left\{\frac{1-\theta}{2}, \theta\right\}=\theta$ if $\theta\ge\frac{1}3$; $=\frac{1-\theta}{2}$ if $\theta\in [0,\frac13]$.
\end{corollary}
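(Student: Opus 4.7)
The plan is to invoke Case 1 of Theorem \ref{thm1} with the specific choices
$$
\alpha(t)=e^{2\theta K t},\qquad c(t)=\frac{nKe^{4\theta K t}}{e^{2\theta K t}-1}\max\left\{\frac{1-\theta}{2},\theta\right\},
$$
so that the resulting inequality $|\nabla\log u|^2-\alpha(t)(\log u)_t\le c(t)$ is precisely \eqref{cor2-eq1}. (The case $\theta=0$ is trivial since then $c\equiv+\infty$, so I may assume $\theta>0$.)

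First I would check Condition (a) of Theorem \ref{thm1}: as $t\downarrow 0$ the denominator $e^{2\theta K t}-1\to 0$, so $c(0+)=+\infty$. Next I would verify the two inequalities in \eqref{case1}. From $\alpha'=2\theta K\alpha$ one gets $\alpha'-2K\alpha=2K(\theta-1)\alpha$, so the first inequality reads
$$
(1-\alpha)c(t)\le\frac{nK\alpha^2(\theta-1)}{2}.
$$
When $\theta\ge 1$ the left side is nonpositive while the right side is nonnegative, so it is automatic. When $\theta<1$, dividing by $1-\alpha<0$ reverses the inequality and reduces it to $c(t)\ge\tfrac{nK\alpha^2(1-\theta)}{2(\alpha-1)}$, which holds for the chosen $c(t)$ because $\max\{(1-\theta)/2,\theta\}\ge(1-\theta)/2$. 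For the second inequality, setting $M=\max\{(1-\theta)/2,\theta\}$, a direct computation gives $\alpha/c=(1-\alpha^{-1})/(nKM)$ and therefore $(\alpha/c)'=2\theta/(nM\alpha)$. The desired bound $(\alpha/c)'\le 2/(n\alpha)$ thus reduces to $\theta\le M$, which again holds by the definition of $M$.

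The conceptual content of the proof — and the only real design choice — is noticing that the two inequalities in \eqref{case1} each impose a lower bound on the constant multiplying $\tfrac{nK\alpha^2}{\alpha-1}$ in $c(t)$: the first forces it to be at least $(1-\theta)/2$, the second forces it to be at least $\theta$, and taking their maximum yields the tight coefficient appearing in \eqref{cor2-eq1}, and explains the crossover at $\theta=\tfrac{1}{3}$. Since $\theta$ is fixed, the function $c(t)$ is $C^1$ in $t$ on $(0,\infty)$, so no smoothing argument (unlike the one in Corollary \ref{cor1}) is required; the conclusion follows directly from Theorem \ref{thm1}. There is no serious obstacle in this proof — the main (very minor) subtlety is simply handling the sign of $1-\alpha$ correctly when converting the first inequality of \eqref{case1} into a lower bound on $c(t)$.
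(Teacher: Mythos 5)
Your proposal is correct and follows essentially the same route as the paper: take $\alpha(t)=e^{2\theta Kt}$, observe that the two inequalities in \eqref{case1} force the coefficient of $\tfrac{nK\alpha^2}{\alpha-1}$ to be at least $\tfrac{1-\theta}{2}$ and $\theta$ respectively, and choose $c(t)$ with the maximum of the two. Your verification of the second condition is in fact slightly more careful than the paper's (you differentiate $\alpha/c$ directly and reduce it to $\theta\le\max\{\tfrac{1-\theta}{2},\theta\}$, whereas the paper only records the integrated necessary condition and asserts the verification), but the argument is the same.
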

\begin{proof}
Let's take $\alpha=e^{2K\theta t}$. It follows from \eqref{case1}
\begin{equation*}
\begin{cases}
c(t)&\ge \frac{ nKe^{4\theta Kt}}{e^{2\theta Kt}-1}\cdot\frac{1-\theta}{2} \\
c(t)&\ge \frac{n\alpha(t)}{2\int_0^t \frac{1}{\alpha(s)}}ds= \frac{ \theta nKe^{4\theta Kt}}{e^{2\theta Kt}-1} .
\end{cases}\end{equation*}
Hence we can take
$$
c(t)=\frac{ nKe^{4\theta Kt}}{e^{2\theta Kt}-1}\max\left\{\frac{1-\theta}{2}, \theta\right\}.
$$ We can verify  $c(t)$ satisfies \eqref{case1} and $c(0+)=+\infty$. By Theorem \ref{thm1}, we complete the proof.
\end{proof}
If we take $\theta=1$ in Corollary \ref{cor2},  we re-find the refined Hamilton type gradient estimate (see Theorem 2.1 and Theorem 3.1 in \cite{Qian19})
\begin{equation*}\label{Hamil-refine}
|\nabla \log u|^2-e^{2Kt}(\log u)_t\le \frac{nKe^{4Kt}}{e^{2Kt}-1}.
\end{equation*}
In \cite{Qian19}, the local estimate is also obtained, hence \eqref{Hamil-refine} holds in the setting of complete Riemannian manifold.  The corresponding Harnack inequality and heat kernel estimate are  derived.

%%\begin{corollary}\label{cor3}
%%Let $M$ be a compact manifold without boundary, $Ricci(M)\ge-K (K\ge0)$. Let $u$ be any positive solution of the  heat equation $u_t=\Delta u $ on $M$. For $h\in C^1[0,\infty)$ satisfying $h\ge h(0):=1$ and $-K(6\theta-2)\le h'(t)\le 2K(1-\theta)e^{-2K\theta t}$ for $t\ge 0$, where $\theta\in [\frac13,1]$ is a constant,  we have for all $t>0$,
%%\begin{equation}\label{cor3-eq1}
%%%|\nabla \log u|^2-\alpha (\log u)_t\le  \frac{n\alpha}{2\int_0^t \frac{1}{\alpha}ds},
%%\end{equation}
%%where
%%$\alpha(t)=e^{2K\theta t}h(t)$.
%%\end{corollary}
%%\begin{proof}
%%%%Direct computation we have
%%%%$$
%%%%\int_0^t \frac{1}{\alpha}ds\le \frac{1-e^{-2K\theta t}}{2K\theta }\le \frac{2\left(1-e^{-2K\theta t}\right)}{2K(1-\theta)-h'} \le \frac{2(\alpha-1)}{(2K\alpha-\alpha')}.
%%%%$$
%%%%The desired result follows from Theorem \ref{thm-case1}.
%%%%%%%%\end{proof}

In the positive curvature case, we have
\begin{corollary}\label{cor2-2}
Let $M$ be a compact manifold without boundary, $Ricci(M)\ge-K (K<0)$.
Let $u$ be any positive solution of the  heat equation $u_t=\Delta u $ on $M$. We have for all $t>0$ and $\theta\in (0,\frac13]$
\begin{equation}\label{cor2-eq2}
|\nabla \log u|^2-e^{2\theta Kt}(\log u)_t\le \frac{ nK\theta e^{4\theta Kt}}{e^{2\theta Kt}-1}.
\end{equation}

\end{corollary}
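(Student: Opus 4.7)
The plan is to mimic the proof of Corollary \ref{cor2} and apply Theorem \ref{thm1} with the specific choices $\alpha(t)=e^{2\theta K t}$ and $c(t)=\dfrac{nK\theta e^{4\theta K t}}{e^{2\theta K t}-1}$, this time verifying Case 1 in the regime where $K<0$ (and hence $\alpha(t)\in(0,1)$ for $t>0$). The whole argument reduces to checking three bookkeeping items: (a) the blow-up $c(0+)=+\infty$, (b) the first inequality in \eqref{case1} translates to the numerical constraint $\theta\le1/3$, and (c) the second inequality in \eqref{case1} holds, in fact as an equality, regardless of $\theta$.

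For (a), a Taylor expansion gives $e^{2\theta K t}-1\sim 2\theta K t$ as $t\downarrow0$, hence $c(t)\sim n/(2t)\to+\infty$. For (c), I would compute
\[
\frac{\alpha(t)}{c(t)}=\frac{1-e^{-2\theta K t}}{nK\theta},
\qquad
\Bigl(\frac{\alpha}{c}\Bigr)'(t)=\frac{2e^{-2\theta K t}}{n}=\frac{2}{n\alpha(t)},
\]
so the second condition of \eqref{case1} is saturated. For (b), using $\alpha'=2\theta K\alpha$ and $\alpha'-2K\alpha=2K(\theta-1)\alpha$, the first inequality of \eqref{case1} becomes
\[
(1-\alpha(t))\,c(t)\le \frac{nK(\theta-1)\alpha(t)^2}{2}.
\]
Since $K<0$, $\theta<1$, and $1-\alpha(t)>0$, both sides are positive, and after dividing by $(1-\alpha(t))$ and substituting the explicit form of $c(t)$, the inequality collapses to $\theta\le(1-\theta)/2$, i.e., $\theta\le 1/3$, which is exactly the hypothesis.

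With (a), (b), (c) in hand, Theorem \ref{thm1} applies directly and yields \eqref{cor2-eq2}. The only subtlety worth guarding against is sign tracking: because $K<0$ flips the signs of $\alpha-1$ and $e^{2\theta K t}-1$ compared with the $K\ge 0$ case of Corollary \ref{cor2}, the first condition in \eqref{case1} turns into an \emph{upper} bound on $c$ rather than a lower bound, which is why only the $\theta$-branch of the maximum in \eqref{cor2-eq1} can be matched, and only in the subrange $\theta\le 1/3$ where $\theta\le(1-\theta)/2$ holds. No smoothing of $c$ is needed, since the chosen $c$ is already $C^1$ on $(0,\infty)$, so the proof should be completely routine once the signs are handled correctly.
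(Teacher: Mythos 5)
Your proposal is correct and follows essentially the same route as the paper: take $\alpha=e^{2\theta Kt}$, observe that for $K<0$ the first inequality of \eqref{case1} becomes an upper bound on $c$ and the second a lower bound (which your choice of $c$ saturates), and note that compatibility of the two forces $\theta\le(1-\theta)/2$, i.e.\ $\theta\le 1/3$. The only cosmetic difference is that you verify the differential form $\left(\frac{\alpha}{c}\right)'=\frac{2}{n\alpha}$ directly, while the paper integrates it first and then picks $c$ at the resulting lower bound; these are equivalent.
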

\begin{proof}
For $\theta\in (0,\frac13]$, we take $\alpha=e^{2K\theta t}$. It follows from \eqref{case1}
\begin{equation*}
\begin{cases}
c(t)&\le \frac{ nKe^{4\theta Kt}}{e^{2\theta Kt}-1}\cdot\frac{1-\theta}{2} \\
c(t)&\ge \frac{n\alpha(t)}{2\int_0^t \frac{1}{\alpha(s)}ds}= \frac{ \theta nKe^{4\theta Kt}}{e^{2\theta Kt}-1} .
\end{cases}\end{equation*}
Hence we can take
$$
c(t)=\frac{ nK\theta e^{4\theta Kt}}{e^{2\theta Kt}-1}.
$$
We can verify  $c(t)$ satisfies \eqref{case1} and $\lim\limits_{t\downarrow0}c(t)=+\infty$. By Theorem \ref{thm1}, we complete the proof.
\end{proof}

\begin{remark}
 If we take $\theta=1/3$ in Corollary \ref{cor2-2}, we re-find  Theorem 2.1 in \cite{BV12}, they also get the estimate of the associated heat kernel and the lower bounds for the eigenvalues of Laplacian. We can derive the following new Harnack inequality from \eqref{cor2-eq2}: for $0\le s<t$ and $x,y\in M$,
    $$
    u(s,x)\le \left(\frac{1-e^{2\theta Kt}}{1-e^{2\theta Ks}}\right)^{\frac{n}{2}}e^{-\frac{\theta K}{2} \frac{d^2(x,y)}{e^{-2\theta Kt}-e^{-2K \theta s}}}u(t,y), \forall\  0<\theta\le \frac13.
    $$
\end{remark}

%\begin{remarks}

%%\item{(2).} In the case of  $K\ge 0$ (negative curvature) and $\alpha<1$ is a constant, the nonlinear condition holds if
%%\begin{eqnarray}
%%\hskip 50pt c(0+)=\lim_{t\to 0} c(t)& =&+\infty\label{case2-1}\\
%%\hskip 50pt 2K+\frac{4(1-\alpha)}{n\alpha^2}c(t)&>&0 \label{case2-2}\\
%%\left(2K+\frac{4(1-\alpha)c(t)}{n\alpha^2}
%%\right)^2-\frac{8(1-\alpha)^2}{n\alpha^2} \left(\frac{2c^2(t)}{n\alpha^2}+c'(t)\right)&\le&0.\label{case2-3}
%%\end{eqnarray}
%From \eqref{case2-3} we can obtain $$
%%c(t)\le -\frac{n\alpha^2k}{4(1-\alpha)}.
%%$$
%%{\bf Contradiction to (\ref{case2-1})
%%Even if we assume $\alpha>1$,  from \eqref{case2-2} we can obtain $c(t)\le \frac{kn\alpha^2}{2(\alpha-1)}$, still contradiction to \eqref{case2-1}. }
%%
%%\item{(3).} In the case of  $K\le 0$ (positive curvature) and $\alpha<1$
%%
%%
%%\item{(4).} In the case of  $K\ge0$ (negative curvature) and  $\alpha(t)=e^{2Kt}$, we have  the conditions (A3), (B3) follow immediately if
%%$$
%%-\frac{2c^2(t)}{n\alpha^2}-c'(t)+\frac{\alpha'(t)}{\alpha(t)}c(t)\le0 \ \mbox{and }\ c(0)=\lim_{t\to0}c(t)=\infty.
%%$$
%%From which we can get $c(t)\ge \frac{nKe^{4Kt}}{e^{2Kt}-1}$.
%%\end{remarks}

\subsection{Case 2 in Theorem \ref{thm1}}
Take $\alpha$ to be certain expression, we find the following
\begin{theorem}\label{thm-case2}
Let $M$ be a compact manifold without
boundary, $Ricci(M)\ge-K (K\ge0)$.  Let $u$ be any positive solution of the  heat equation $u_t=\Delta u
$ on $M$.For a given $C^1$ positive function
 $a(t):(0,\infty)\to (0,\infty)$, we always suppose $a(t)$  satisfies the following assumptions:\\
 (A1). For all $t>0$, $a(t)>0$ and $\lim_{t\to0}a(t)=0$, $\lim\limits_{t\to0}\frac{a(t)}{a'(t)}=0$. \\
 (A2). For any $L>0$,
 $\frac{a'^2}{a}$ is  continuous and integrable on the interval $[0,L]$.

Then we have for all $t>0$,  \begin{equation}\label{eq-case2} \left|\nabla
\log u\right|^2-\alpha(t)(\log u)_t\le c(t), \end{equation}
where
 \begin{equation}\label{eq-case2-1}
\alpha(t)=\frac{2K}{a(t)}\int_0^ta(s)ds+1, \
c(t)=\frac{nK}{2}+\frac{nK^2}{2a(t)}\int_0^ta(s)ds+\frac{n}{8a(t)}\int_0^t\frac{a'^2(s)}{a(s)}ds.
 \end{equation}
\end{theorem}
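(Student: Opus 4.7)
\medskip

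\noindent\textbf{Proof plan for Theorem \ref{thm-case2}.} The strategy is to verify the hypotheses of Theorem \ref{thm1} with the given choice of $\alpha$ and $c$, and specifically to show that Case 2 (the inequality \eqref{case2}) is satisfied with equality. I would organize the argument around the following three steps.

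\smallskip

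\noindent\emph{Step 1. Rewrite $\alpha$ and $c$ in compact form.} Set $A(t)=\int_0^t a(s)\,ds$ and $B(t)=\int_0^t \frac{a'^2(s)}{a(s)}\,ds$, so that
$$\alpha(t)=\frac{2KA(t)}{a(t)}+1,\qquad c(t)=\frac{nK}{2}+\frac{nK^2A(t)}{2a(t)}+\frac{nB(t)}{8a(t)}.$$
A direct computation gives the useful identity $\alpha'(t)=2K-(\alpha-1)\frac{a'}{a}$, equivalently
$$2K-\alpha'=(\alpha-1)\frac{a'}{a},\qquad 2K\alpha-\alpha'=(\alpha-1)\Bigl(2K+\frac{a'}{a}\Bigr).$$
These relations are the key to collapsing the rather complicated inequality \eqref{case2}.

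\smallskip

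\noindent\emph{Step 2. Reduce \eqref{case2} to an identity for $(ca)'$.} Because $\alpha-1=\frac{2KA}{a}>0$ for $t>0$ (assuming $K>0$; the case $K=0$ can be treated separately and is essentially trivial), I can divide \eqref{case2} by $(\alpha-1)^2$. Using the identities of Step 1, \eqref{case2} is equivalent to
$$c'(t)+\frac{a'(t)}{a(t)}\,c(t)\ \ge\ \frac{n(2Ka(t)+a'(t))^2}{8\,a(t)^2},$$
or multiplying by $a(t)$,
$$\bigl(c(t)a(t)\bigr)'\ \ge\ \frac{n\bigl(2Ka(t)+a'(t)\bigr)^2}{8\,a(t)}.$$
Now I differentiate $ca=\frac{nKa}{2}+\frac{nK^2A}{2}+\frac{nB}{8}$ term by term and check that the right-hand side expands exactly to $(ca)'$; so \eqref{case2} holds, in fact with equality. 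This is the central computation but is purely algebraic, and the work is essentially done once the substitution above is written out.

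\smallskip

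\noindent\emph{Step 3. Verify the remaining hypotheses of Theorem \ref{thm1}.} I need $c(0+)=+\infty$. From assumption (A1), $\lim_{t\downarrow 0}a(t)/a'(t)=0$, so $\lim_{t\downarrow 0}a'(t)/a(t)=+\infty$, and L'H\^opital applied to $B(t)/a(t)$ gives $\lim_{t\downarrow 0}B(t)/a(t)=\lim_{t\downarrow 0} a'(t)/a(t)=+\infty$ (finiteness of $B$ on $[0,L]$ coming from (A2) is what makes the L'H\^opital step legitimate). Thus $c(0+)=+\infty$. The regularity $\alpha\in C^1[0,\infty)$ with $\alpha(0+)=1>0$ can be read off $\alpha=1+2KA/a$; if the limiting value of $\alpha'$ at $0$ is not immediate from (A1)--(A2), one argues by approximating $a$ by smooth functions as in the proof of Corollary \ref{cor1}.

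\smallskip

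\noindent The main obstacle is Step 2: the inequality \eqref{case2} looks opaque, and the point is to discover the hidden structure that makes it collapse to the identity $(ca)'=\frac{n(2Ka+a')^2}{8a}$. Once this is seen, the rest is bookkeeping. Conclusion then follows directly from Theorem \ref{thm1}, Case 2.
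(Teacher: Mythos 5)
Your proposal is correct and follows essentially the same route as the paper, which merely asserts that a ``direct computation'' verifies Case 2 of Theorem \ref{thm1}; your Step 2 supplies that computation, correctly collapsing \eqref{case2} via the identities $2K-\alpha'=(\alpha-1)\frac{a'}{a}$ and $2K\alpha-\alpha'=(\alpha-1)\bigl(2K+\frac{a'}{a}\bigr)$ to $(ca)'=\frac{n(2Ka+a')^2}{8a}$, which holds with equality for the stated $\alpha$ and $c$. Your L'H\^opital argument for $c(0+)=+\infty$ in Step 3 is likewise sound.
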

\begin{proof}
We only need to verify Assumption (a) and  {\bf Case 2}  of (b) in Theorem \ref{thm1}, i.e. \eqref{case2}.  For such function $a$, we take $\alpha(t)=\frac{2K}{a(t)}\int_0^ta(s)ds+1$, direct computation gives: $c(t)$ defined in \eqref{eq-case2-1} satisfies \eqref{case2} and $c(0+)=+\infty$.
\end{proof}

\begin{remarks}%\label{cor3}
\item{(1).} If we take $a(t)=t^2$ and $a(t)=\sinh^2(Kt)$, we can get the estimates \eqref{LX-eq1} and \eqref{LX-eq2} respectively. Different choice of $a(t)$ gives various differential Harnack inequalities, e.g. see \cite{Qian14}.
\item{(2).} Compared with Theorem 1.1 in \cite{Qian14}, we drop the assumption of $a'>0$. In fact, going through carefully the proof of Theorem 1.1 in \cite{Qian14}, we can also drop the assumption of $a'>0$.
\item{(3).} We mention here that: For Theorem 1.1 and Theorem 1.2 in \cite{Li-Xu11}, it {\bf does} need the assumption of nonpositive Ricci curvature ($K\ge0$) for the local differential Harnack inequality, since in their proof it is necessary to assume $\alpha>1$, see Line-4 of Page 4468. For the compact manifolds with convex boundary, the global differential Harnack inequalities \eqref{eq-case2} works both for the negative curvature and positive curvature for time $t$ satisfying $\alpha(t)>0$, since the proof of Theorem 1.1 in \cite{Qian14} works in this case. Indeed  (2.8) in \cite{Qian14} gives   $(\Delta-\partial_t)(aF)\ge -2\nabla (aF)\cdot\nabla f$ and we consider $aF$ instead of $F$ after (2.8).
\end{remarks}

Combining Corollary \ref{cor1}, Remarks \ref{rem2}, Corollary \ref{cor2},Corollary \ref{cor2-2} and Theorem \ref{thm-case2}, we see that Theorem \ref{thm1} or more generally Theorem \ref{YHM-thm0} provides a general form of  differential Harnack inequality, which unifies and partial improves the classical Li-Yau inequality \eqref{LY-grad}, Davies's estimate \eqref{LY-Hamil},  Hamilton differential Harnack inequality \eqref{Hamil}, Li-Xu's result \eqref{LX-eq1},  \eqref{LX-eq2}, Baudoin-Garofalo \cite{BG11}, Qian \cite{Qian14} etc. %%Furthermore we explain: the classical Li-Yau inequality \eqref{LY-grad}, Hamilton differential Harnack inequality \eqref{Hamil} is corresponding to the Case 1 in Proposition \ref{prop-basic}

%{\bf Acknowledgement}:   The author is  greatly indebted to Prof. L. M. Wu (Clermont-Auvergne, France), Prof. D.
%Bakry (Toulouse University III, France),  Prof. X. D. Li (Institute of Applied Mathematics, Academia Sinica, China, China) and Prof. H. Q. Li (Fudan university) for their constant %encouragement and support.  Sponsored by the NSF of China (No. 11671076) and Qing Lan Project of Jiangsu.


\begin{thebibliography}{}

\bibitem{BBG17}
D.~Bakry, F.~Bolley, and I.~Gentil.
\newblock The {L}i-{Y}au inequality and applications under a
  curvature-dimension condition.
\newblock {\em Ann. Inst. Fourier}, 67(1):397--421, 2017.

\bibitem{Bakry-Ledoux06} D. Bakry, M. Ledoux, A logarithmic Sobolev
 form of the Li-Yau parabolic inequality,  Revista Mat.
 Iberoamericana, 22 (2006) 683-702.

\bibitem{BQ99} D. Bakry,  Z. Qian, Harnack inequalities on a manifold with positive or negative Ricci curvature, Rev. Mat. Iberoam. 15 (1999), no. 1,  143-179.


\bibitem{Davies89} E. B. Davies, Heat kernels and spectral theory, Cambridge Tracts in Mathematics, vol. 92, Cambridge University Press, Cambridge, 1989, x+197 pages.


\bibitem{BG11} F. Baudoin, N. Garofalo, Perelman's entropy and doubling property on Riemannian manifolds J. Geom. Anal. 21 (2011), no. 4, 1119-1131.

\bibitem{BV12} F. Baudoin, A. Vatamanelu, A note on lower bounds estimates for
the Neumann eigenvalues of  manifolds with positive Ricci curvature, Potential Analysis, 37(1) (2012), 91-101.

\bibitem{Hamilton93} R. S. Hamiltom, A Matrix harnack estimate
for the heat equation, Comm. in Ana. and Geom. 1 (1993) 113-126.


\bibitem{Li-Xu11} J. Li, X. Xu, Differential Harnack inequalities on Riemannian manifolds I: linear heat equation Adv. Math. 226 (2011), no. 5, p. 4456-4491.


\bibitem{LY} P. Li, S. T. Yau,  On the parabolic kernel of the Schr\"odinger operator,
  Acta. Math. 156 (1986), 153-201.

\bibitem{MZS08}L. Ma, L. Zhao, X. F. Song,  Gradient estimate for the degenerate
parabolic equation $u_t=\Delta F(u)+H(u)$ on manifolds, J. Diff.
Equ. 224 (2008) 1157-1177.


\bibitem{Qian14} B. Qian, Remarks on differential Harnack inequalities, J. Math. Anal. Appl. 409 (2014), no. 1, 556-566.

\bibitem{Qian19} B. Qian, Refined Hamilton type differential Harnack inequality,  preprint, 2019.


\bibitem{SZ06}P. Souplet, Q. S. Zhang,  Sharp gradient estimate and Yau's liouvill theorem for the heat equation on noncompact manifolds, Bull. London Math. Soc. 38(2006) 1045-1053.


\bibitem{Yau94} S. T. Yau, On the Harnack inequalities for partial
differential equations, Comm. Anal. Geom., 2 (1994), 431-450.

\bibitem{Yau95} S. T. Yau, Harnack inequality for non-self-adjoint
evolution equations. Math. Research Lett., 2 (1995), 387-399.

\bibitem{Yu-Zhao19} C. J. Yu, F. F. Zhao, A note on Li-Yau type gradient estimate, Acta Math. Sci. (B), 39 (2019), no. 4, 1185-1194.

\bibitem{Yu-Zhao20} C. J. Yu, F. F. Zhao, Sharp Li-Yau-type gradient estimates on hyperbolic spaces, J. Geom. Anal. 30 (2020), no. 1, 54-68.




 \end{thebibliography}
\end{document}